\newcommand{\R}{\mathbb R}
\newcommand{\C}{\mathbb C}
\newcommand{\Z}{\mathbb Z}
\newcommand{\D}{\mathbb D}
\newcommand{\tr}{\mathop {\rm trace}\nolimits}
\newcommand{\diag}{\mathop {\rm diag}\nolimits}
\renewcommand{\Re}{\mathop {\rm Re}\nolimits}
\newtheorem{theorem}{Theorem}
\newtheorem{corollary}[theorem]{Corollary}
\newtheorem{lemma}[theorem]{Lemma}
\title{Matrices with invariant by rotation numerical ranges}
\author{Michel Crouzeix\footnote{Univ.\,Rennes, CNRS, IRMAR\,-\,UMR\,6625, F-35000 Rennes, France.
email: michel.crouzeix@univ-rennes.fr},}
\begin{document}

\maketitle
\begin{abstract}
We characterize the $d\times d$ matrices whose numerical ranges are invariant by rotations of angle $2\pi/d$.
\end{abstract}

\section{Introduction}
Here we consider a square matrix $A\in\C^{d,d}$ whose the numerical range $W(A)$ is invariant by a rotation of angle $2\pi /d$. Without loss of generality, it suffices to look at the case of rotation centered at the origin $0$. We want to characterize such a matrix. For that, we introduce the condition
\[
(C_d)\qquad\left\{\begin{matrix}
\textrm{There exists a polynomial  } P\in\R[w]\ \textrm{such that} \hfill\\[3pt]
\det\big(w\,I-\tfrac12(e^{-i\theta}A{+}e^{i\theta}A^*))\big)=P(w)-\big(\tfrac{-1}2\big)^{d-1}\Re\big(e^{-id\theta }\det(A)\big).\hfill
\end{matrix}
\right.
\]
A first  characterization will be given by the theorem

\begin{theorem}\label{th1}
 Assume that $A\in\C^{d,d}$, $d\geq 3$. Then, for having the numerical range $W(A)$ invariant by the rotation centered at $0$ of angle $2\pi /d$, the condition $(C_d)$ is sufficient. If $\det(A)\neq 0$, this condition is necessary and then we have $A^d=(-1)^{d-1}\det(A)I$ and furthermore $W(A)$ has the same symmetry group as a regular $d$-gon.
\end{theorem}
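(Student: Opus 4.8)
\emph{Approach.} Everything is organised around the support function of $W(A)$. For $\theta\in\R$ set $H_\theta=\tfrac12(e^{-i\theta}A+e^{i\theta}A^*)$, a Hermitian matrix; then $\lambda_{\max}(H_\theta)=\max_{\|x\|=1}\Re(e^{-i\theta}x^*Ax)$ is the value at the direction $e^{i\theta}$ of the support function $h$ of $W(A)$, so $W(A)$ is invariant by the rotation of angle $2\pi/d$ about $0$ if and only if $\theta\mapsto\lambda_{\max}(H_\theta)$ has period $2\pi/d$. The plan is to combine this with the elementary identity $(2e^{i\theta})^d\det(wI-H_\theta)=\det\bigl(2we^{i\theta}I-A-e^{2i\theta}A^*\bigr)$, which shows that $\theta\mapsto\det(wI-H_\theta)$ is a trigonometric polynomial of degree at most $d$ whose $e^{\pm id\theta}$-coefficients are read off from the extreme coefficients $(-1)^d\det(A)$ and $(-1)^d\overline{\det(A)}$ of the right-hand side as a polynomial in $e^{i\theta}$; a short computation identifies the frequency-$d$ part of $\det(wI-H_\theta)$ with $-\bigl(\tfrac{-1}{2}\bigr)^{d-1}\Re\bigl(e^{-id\theta}\det(A)\bigr)$. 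Hence $(C_d)$ holds exactly when $\det(wI-H_\theta)$ carries only the frequencies $0,\pm d$ in $\theta$, i.e. exactly when $\theta\mapsto\det(wI-H_\theta)$ has period $2\pi/d$.

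Sufficiency is then immediate: under $(C_d)$ the characteristic polynomial of $H_\theta$ has period $2\pi/d$, so $H_\theta$ and $H_{\theta+2\pi/d}$ are isospectral, so $\lambda_{\max}$ has period $2\pi/d$ and $W(A)$ is invariant. For the necessity I assume $W(A)$ invariant and $\det(A)\neq0$, and I first note that $0$ lies in the interior of $W(A)$: otherwise, picking a nonzero $p\in W(A)$ the vertices $e^{2\pi ik/d}p$ of a genuine regular $d$-gon lie in $W(A)$ and place $0$ in its interior, while $W(A)=\{0\}$ would give $A=0$. The crux, and the step I expect to be the main obstacle, is to upgrade the periodicity of the extreme eigenvalues $\lambda_{\max}(H_\theta)$ and $\lambda_{\min}(H_\theta)=-\lambda_{\max}(H_{\theta+\pi})$ to periodicity of the whole characteristic polynomial, i.e. to $(C_d)$.

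My line of attack is via the algebraic curve $\det(wI-H_\theta)=0$. Since $h(\theta)=\lambda_{\max}(H_\theta)$ is a piecewise real-analytic branch of this curve and is $2\pi/d$-periodic, the two monic degree-$d$ polynomials $\det(wI-H_\theta)$ and $\det(wI-H_{\theta+2\pi/d})$, whose coefficients are trigonometric polynomials in $\theta$, share the root $w=h(\theta)$ for every $\theta$; therefore their resultant in $w$ vanishes identically and they have a common factor over the field of trigonometric rational functions. When $\det(wI-H_\theta)$ is irreducible over that field this already forces the two polynomials to coincide, which is $(C_d)$. In the reducible case one must follow how the rotation $\theta\mapsto\theta+2\pi/d$ acts on the irreducible factors, i.e. on the components of the Kippenhahn curve: the factor carrying $\lambda_{\max}$, and likewise the one carrying $\lambda_{\min}$, is matched by the rotation to a factor of the shifted polynomial, and following these matchings around the whole circle should force all factors to be permuted and the product to be invariant. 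It is exactly here that $\det(A)\neq0$ enters, since it makes the extreme Fourier coefficients of $\det(wI-H_\theta)$ equal to the nonzero constants $2^{-d}(-1)^d\det(A)$ and $2^{-d}(-1)^d\overline{\det(A)}$, which excludes degenerate factorisations; making this book-keeping airtight is the delicate point.

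Granting $(C_d)$, the remaining assertions are formal. Writing $\det(wI-H_\theta)=w^d+c_{d-1}(\theta)w^{d-1}+\dots+c_0(\theta)$, condition $(C_d)$ forces $c_1,\dots,c_{d-1}$ and the zero-frequency part of $c_0$ to be independent of $\theta$; since $(-1)^kc_{d-k}(\theta)$ is the $k$-th elementary symmetric function of the eigenvalues of $H_\theta$ and the $e^{-ik\theta}$-coefficient of $\tr(H_\theta^k)$ equals $2^{-k}\tr(A^k)$, Newton's identities give $\tr(A^k)=0$ for $1\le k\le d-1$, hence the characteristic polynomial of $A$ is $x^d+(-1)^d\det(A)$, and Cayley--Hamilton yields $A^d=(-1)^{d-1}\det(A)\,I$. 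In particular $A$ has the $d$ distinct $d$-th roots of $(-1)^{d-1}\det(A)$ as simple eigenvalues, so $W(A)$ contains the regular $d$-gon spanned by these vertices. Finally, writing $\det(A)=|\det(A)|\,e^{i\psi}$, condition $(C_d)$ reads $\det(wI-H_\theta)=P(w)-\bigl(\tfrac{-1}{2}\bigr)^{d-1}|\det(A)|\cos(d\theta-\psi)$, which is invariant under $\theta\mapsto\tfrac{2\psi}{d}-\theta$; hence $h(\theta)=h(\tfrac{2\psi}{d}-\theta)$, i.e. $W(A)$ is symmetric across the line through $0$ of argument $\psi/d$, and together with the rotational symmetry this is the full dihedral symmetry of the regular $d$-gon. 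Since $W(A)$ is not a disk --- a constant $h$ would force, again through $(C_d)$, $\det(A)=0$ --- its symmetry group is exactly that of a regular $d$-gon.
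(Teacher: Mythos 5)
Your sufficiency argument, the identification of the frequency-$d$ part of $\det(wI-H_\theta)$ with $-\bigl(\tfrac{-1}{2}\bigr)^{d-1}\Re\bigl(e^{-id\theta}\det(A)\bigr)$, and the concluding deductions (Newton's identities giving $\tr(A^k)=0$ for $k<d$, Cayley--Hamilton, and the reflection across $\arg z=\psi/d$ yielding the dihedral symmetry) are all correct. The problem is the step you yourself flag as delicate: upgrading the $2\pi/d$-periodicity of $\lambda_{\max}(H_\theta)$ to periodicity of the whole characteristic polynomial. Your resultant argument only produces \emph{one} common irreducible factor of $D(\theta,w):=\det(wI-H_\theta)$ and $D(\theta+2\pi/d,w)$, namely a factor carrying the branch $\lambda_{\max}$. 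The proposed book-keeping --- ``following these matchings around the whole circle should force all factors to be permuted'' --- does not close the gap: an irreducible factor of the Kippenhahn polynomial whose eigenvalue branches never realize $\lambda_{\max}$ or $\lambda_{\min}$ is invisible to the matching, and nothing in your sketch constrains it. Likewise, asserting that the nonvanishing of the extreme Fourier coefficients ``excludes degenerate factorisations'' is not an argument; $\det(A)\neq0$ must enter in a concrete, quantitative way.

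The paper closes exactly this gap with a counting argument that is absent from your proposal. It compares $T(u,v,w)=\det(uB+vC+wI)$ not with a single shift but with its rotational average $S=\frac1d\sum_{k}T_k$, which is automatically $2\pi/d$-periodic in $\theta$; rotation invariance makes $w_M(\theta)$ a common root of $T$ and $S$ for every $\theta$, so their gcd $Q(u,v,w)$, a homogeneous polynomial monic of degree $r$ in $w$, is nontrivial. The decisive point is then quantitative: $\det(A)\neq0$ rules out $W(A)$ being a disk, so $w_M$ is a non-constant continuous $2\pi/d$-periodic function and each intermediate value $x$ between $w_{\min}$ and $w_{\max}$ is attained at $2d$ distinct angles; the system $Q(u,v,x)=0$, $u^2+v^2=1$ therefore has at least $2d$ solutions, which by B\'ezout forces $\deg_{u,v}Q\geq d$, whence $r=d$ and $T=Q=S$. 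Writing $T=S$ in homogeneous form gives $(C_d)$ and, evaluated at $(u,v)=(-1,-i)$, gives $\det(wI-A)=w^d-(-1)^{d-1}\det(A)$ directly. The same counting would in fact rescue your own setup, applied to the gcd of $D(\theta,\cdot)$ and $D(\theta+2\pi/d,\cdot)$; but as written, your proof of necessity is incomplete at its central step.
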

We will see that, if $\det(A)=0$, the condition $(C_d)$ implies that $W(A)$ is a disk. Conversely, 
if $W(A)$ is a disk centered at $0$, then it is known that $\det(A)=0$; but, for all $d\geq 3$, there exist matrices not satisfying $(C_d)$ such that $W(A)$ is a disk. \medskip

\noindent{\bf Remark.}{\it We do not consider the trivial case $d=2$. All $2\times2$ matrices have their numerical ranges invariant by the rotation of angle $\pi $ with the same center as the ellipse $W(A)$.}
\medskip
 
The condition $(C_d)$ is not simple to verify, we have found it easier to use the following
 \begin{theorem}\label{th2}
 Assume that $A\in\C^{d,d}$, $d\geq 3$. Then, the condition $(C_d)$ is equivalent to
\begin{align*}
\tr\big((e^{-i\theta }A{+}e^{i\theta }A^*)^k\big)&=\textrm{constant,  for }k=1,\dots,d{-}1,\\
\textrm{and\quad} \tr\big((e^{-i\theta }A{+}e^{i\theta }A^*)^d\big)&=e^{-id\theta }\tr(A^d){+}e^{id\theta }\tr(A^{*d}){+}\textrm{constant}.
\end{align*}
\end{theorem}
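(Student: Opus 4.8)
The plan is to unwind $(C_d)$ into two independent requirements on the Hermitian matrix $H_\theta:=e^{-i\theta}A+e^{i\theta}A^*$, and then to match each of them with one of the two displayed identities by means of Newton's identities. Write
\[
\det\big(wI-\tfrac12H_\theta\big)=w^d+\sum_{k=1}^{d}(-1)^k c_k(\theta)\,w^{d-k},
\]
where $c_k(\theta)$ is the sum of the $k\times k$ principal minors of $\tfrac12H_\theta$ (equivalently, the $k$-th elementary symmetric function of its eigenvalues); since $\tfrac12H_\theta$ is Hermitian, each $c_k$ is a real-valued trigonometric polynomial in $\theta$, and $c_d(\theta)=\det(\tfrac12H_\theta)=2^{-d}\det(H_\theta)$. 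Comparing the coefficients of $w^{d-1},\dots,w^{1}$ on the two sides of $(C_d)$, and then the constant terms, one checks that $(C_d)$ is equivalent to the conjunction of (a) $c_1(\theta),\dots,c_{d-1}(\theta)$ are independent of $\theta$; and (b) $(-1)^dc_d(\theta)+(-\tfrac12)^{d-1}\Re\big(e^{-id\theta}\det A\big)$ is independent of $\theta$ (given (a) and (b) the required real polynomial $P$ exists: take its coefficients of $w^{d},\dots,w^{1}$ to be those constant values and its constant term to be the $\theta$-independent quantity in (b)). A direct computation turns (b) into the statement that $\det(H_\theta)-e^{-id\theta}\det A-e^{id\theta}\overline{\det A}$ is independent of $\theta$; and since the coefficients of $e^{-id\theta}$ and $e^{id\theta}$ in the trigonometric polynomial $\theta\mapsto\det(e^{-i\theta}A+e^{i\theta}A^*)$ are always $\det A$ and $\det(A^*)=\overline{\det A}$, condition (b) is the same as saying that this trigonometric polynomial has no Fourier modes other than the constant one and $e^{\pm id\theta}$, i.e. $\det(H_\theta)=e^{-id\theta}\det A+e^{id\theta}\overline{\det A}+\textrm{const}$.

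Next I bring in the power sums $p_k(\theta):=\tr\big((\tfrac12H_\theta)^k\big)=2^{-k}\tr(H_\theta^k)$. The Newton identities $p_k=c_1p_{k-1}-c_2p_{k-2}+\dots+(-1)^{k-1}k\,c_k$ for $1\le k\le d$ give, over $\mathbb{Q}$, a triangular change of variables with invertible diagonal between $(c_1,\dots,c_k)$ and $(p_1,\dots,p_k)$ for each $k\le d$. Hence (a) is equivalent to: $p_1(\theta),\dots,p_{d-1}(\theta)$ are constant in $\theta$, i.e. $\tr(H_\theta^k)$ is constant for $k=1,\dots,d-1$ — which is the first line of Theorem~\ref{th2}. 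Moreover, assuming (a), the Newton identity for $k=d$ collapses to $p_d(\theta)=(-1)^{d-1}d\,c_d(\theta)+\textrm{const}$, that is,
\[
\tr(H_\theta^d)=(-1)^{d-1}d\,\det(H_\theta)+\textrm{const}.
\]

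It remains, still under assumption (a), to match (b) with the second line of Theorem~\ref{th2}. By the last display, $\tr(H_\theta^d)$ and $\det(H_\theta)$ differ by a nonzero multiplicative constant and an additive constant, so one of them has all its Fourier modes at the frequencies $0$ and $\pm d$ if and only if the other does. Since the coefficients of $e^{-id\theta}$ and $e^{id\theta}$ in $\tr(H_\theta^d)=\tr\big((e^{-i\theta}A+e^{i\theta}A^*)^d\big)$ are $\tr(A^d)$ and $\tr(A^{*d})=\overline{\tr(A^d)}$, this means that (b) is equivalent to $\tr(H_\theta^d)=e^{-id\theta}\tr(A^d)+e^{id\theta}\tr(A^{*d})+\textrm{const}$, the second line of Theorem~\ref{th2}. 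Combining with the equivalence of (a) with the first line, $(C_d)$ holds if and only if both displayed identities hold.

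I expect the coefficient bookkeeping that turns $(C_d)$ into (a)+(b), and the sign and scaling conventions in Newton's identities, to be entirely routine. The one step that carries the content — and that makes the determinant condition and the trace condition interchangeable — is the collapse of the $k=d$ Newton identity under (a) to the affine relation $\tr(H_\theta^d)=(-1)^{d-1}d\,\det(H_\theta)+\textrm{const}$; note in passing that comparing the $e^{-id\theta}$-coefficients there already forces $\tr(A^d)=(-1)^{d-1}d\det A$, in agreement with the relation $A^d=(-1)^{d-1}(\det A)I$ from Theorem~\ref{th1} when $\det A\neq0$.
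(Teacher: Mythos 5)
Your proposal is correct and follows essentially the same route as the paper: rewrite $(C_d)$ as constancy conditions on the coefficients of the characteristic polynomial of $\tfrac12(e^{-i\theta}A+e^{i\theta}A^*)$, then pass between these elementary symmetric functions and the power sums $\tr\big((e^{-i\theta}A+e^{i\theta}A^*)^k\big)$ via Newton's identities, handling $k=d$ by noting that under the first condition the $d$-th identity reduces to an affine relation between $\tr(H_\theta^d)$ and $\det(H_\theta)$ and that the $e^{\pm id\theta}$ coefficients are $\tr(A^d)$, $\tr(A^{*d})$ (resp.\ $\det A$, $\overline{\det A}$). Your write-up just makes explicit the bookkeeping that the paper treats as ``clearly equivalent.''
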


The two next sections are devoted to the proofs of these theorems. Section\,4 is concerned with a description of the boundary $\partial W(A)$ of the numerical range when the condition $(C_d)$ is satisfied and when we are not in the trivial cases where $W(A)$ is a disk or a polygon. Then the boundary is $C^1$ and without loss of generality we may assume that $(-1)^{d-1}\det(A)>0$.
We look at three different descriptions of the boundary $\partial W(A)$ 
\begin{align*}
\partial W(A)&=\{z\,; z=\zeta (\theta )\,,\theta \in\R\},\quad \partial W(A)=\{z\,; z=r(\psi  )e^{i\psi },\psi  \in\R\},\\
\partial W(A)&=\{z\,; z=\sigma (e^{it})\,,t \in\R\}.
\end{align*}
The first comes from the tangential representation, $\theta $ is a.e. the angle of the exterior unit normal $\nu(\theta )=\cos\theta {+}i\sin\theta $ to the boundary; the second corresponds to the polar representation, $r(\cdot)$ is a positive-valued function;
for the third, $\sigma $ is the Riemann mapping from the closed unit disk $\overline\D$ onto $W(A)$ satisfying $\sigma (0)=0$ and $\sigma '(0)>0$. In each case, the functions $\theta \mapsto e^{-i\theta } \zeta (\theta )$, $\psi \mapsto r(\psi )$ and $t\mapsto e^{-it}\sigma (e^{it})$ are $C^1$, even and $\frac{2\pi }d$-periodic. The maps $\theta \mapsto \arg(\zeta (\theta))$ and $t\mapsto \arg(\sigma (e^{it}))$ are increasing on $\R$; the maps
 $\theta \mapsto |\zeta (\theta)|$, $\psi \mapsto r(\psi )$, and $t\mapsto |\sigma (e^{it})|$ are decreasing
 on the interval $[0,\frac\pi d]$. When the boundary has no straight parts, these functions are single-valued and analytic on $\R$. 
Otherwise, there exists $\psi _0\in(0,\frac\pi d)$ such that the arc of boundary $\{z\in \partial W(A)\,; -\psi _0\leq \arg(z)\leq \psi _0\}$ has an analytic representation by $\zeta $, $r$ or $\sigma $ and the part 
$\{z\in \partial W(A)\,; \psi _0\leq \arg (z)\leq \frac{2\pi }d{-} \psi _0\}$ is a straight segment orthogonal to $\nu(\frac\pi d)$. 

In Section\,5, we consider a family of matrices obtained by multiplying a permutation matrix by a diagonal matrix ; for them the invariance by rotation of the numerical range is well-known \cite{choi,iss,tsaiwu}.
We show that this is the unique family satisfying $(C_d)$ if $d=3$ (up to a unitary similarity). In Section\,6, we will see that many other matrices have this invariance property. 
We provide the algebraic equations which characterize them for $d=4$: we get four cubic equations with nine real parameters.
We do not think it is possible to obtain explicit formulae for a general solution, but we describe some sub-families of solutions.

\section {Proof of Theorem\,\ref{th1}}

Before going to the proof, we recall some facts concerning the tangential representation of the numerical range. We define
\begin{align}\label{defT}
T(u,v,w)=\det(u\,B{+}v\,C{+}w\,I),\quad\textrm{where }B=\frac12(A{+}A^*),\ C=\frac1{2i}(A{-}A^*).
\end{align}
The polynomial $T(u,v,w)$ is homogeneous of degree $d$, with real coefficients in the variables $u,v,w$, the coefficient of $w^d$ being $1$. For each  $(u,v)\in \R^2$, the equation $T(u,v,w)=0$ has 
$d$ real roots that are the eigenvalues of the self-adjoint matrix $-(uB{+vC)}$.
Note also that $T(-1,-i,\lambda)=\det(\lambda I{-}A)$, and thus the eigenvalues of the matrix $A$ are
the roots of the equation $T(-1,-i,\lambda)=0$.
\medskip

We denote by $w_M(u,v)$ the greatest root of  $T(u,v,w)$ and let $w_M(\theta )=w_M(-\cos \theta ,-\sin\theta )$. Thus, $w_M(\theta )$ is the greatest root of $T(-\cos\theta ,-\sin\theta ,w)=
\det\big(w\,I-\tfrac12(e^{-i\theta}A{+}e^{i\theta}A^*)\big)=0$, i.e, $w_M(\theta )$ is the greatest 
eigenvalue of the self-adjoint matrix $M(\theta )=\frac12(e^{-i\theta }A{+}e^{i\theta }A^*)$; thus for all unit vectors $\alpha\in\C^d$, it holds $\langle M(\theta )\alpha,\alpha\rangle\leq w_M(\theta )$
\footnote{ $\langle x,y\rangle$ denotes the usual inner product in $\C$} and there exists at least one unit vector $\alpha$ which realizes  $\langle M(\theta )\alpha,\alpha\rangle=w_M(\theta )$. 
Setting $z=x{+}iy=\langle A\alpha,\alpha\rangle$ and noticing that  $\langle M(\theta )\alpha,\alpha\rangle=\Re (e^{-i\theta}z)=x\,\cos\theta{+}y\,\sin\theta$, it can be seen that all $z\in W(A)$ belong to the half plane $\Pi _\theta :=\{x{+}iy\,; x\,\cos\theta{+}y\,\sin\theta\leq w_M(\theta )\}$, and that there exists at least
one contact point between $W(A)$ and the straight line $\Delta _\theta := \{x{+}iy\,; x\,\cos\theta{+}y\,\sin\theta=w_M(\theta )\}$ boundary of $\Pi_\theta $. We set $\zeta (\theta )=W(A)\cap\Delta _\theta$, and thus either $\zeta (\theta )$ is reduced to the unique contact point between $W(A)$ and the tangent $\Delta _\theta $, or $\zeta (\theta )$ is a segment $[\zeta _-(\theta ),\zeta _+(\theta )]$ with $\arg(\zeta _-(\theta ))<\arg(\zeta _+(\theta ))$.
We have
 \begin{align*}
W(A)&=\{z\in\C\,; z\in \Pi_\theta ,\ \forall \theta \in[0,2\pi ]\}\quad\textrm{and}\quad \partial W(A)=\{\zeta (\theta )\,; \theta \in[0,2\pi ]\}.
\end{align*}
From this, it is clear that $W(A)$ is invariant by the rotation centered at $0$ of angle $2\pi /d$ if and only if $w_M$ is periodic with period $2\pi /d$, and that $W(A)$ is symmetric with respect to the straight line
$e^{i\varphi }\R$ if and only if $w_M(\varphi {-}\theta )=w_M(\varphi {+}\theta )$ for all $\theta $.
\begin{proof}[Proof of Theorem\,\ref{th1}]
If $A$ satisfies the condition $(C_d)$
\[
\det\big(w\,I-\tfrac12(e^{-i\theta}A{+}e^{i\theta}A^*))\big)=P(w)-\big(\tfrac{-1}2\big)^{d-1}\Re\big(e^{-id\theta }\det(A)\big),
\]
then it is clear that $w_M$ is periodic with period $2\pi /d$, thus $W(A)$ is invariant by the rotation centered at $0$ of angle $2\pi /d$ and it is symmetric with respect to the line $e^{i\varphi }\R$ with $\varphi =\arg(\det(A))$. \smallskip

For the necessary part, we assume this rotation invariance for $W(A)$ and $\det(A)\neq 0$, (this implies that $W(A)$ is not a disk since otherwise $0$ would be an eigenvalue of $A$).  Then, we set $A_k=e^{2ik\pi /d}A$,
 $B_k=\frac12(A_k{+}A_k^*)$, $C_k=\frac1{2i}(A_k{-}A_k^*)$, \\$T_k(u,v,w)=\det(uB_k{+}vC_k+wI)$ and $S(u,v,w)=\sum_{k=0}^{d-1}T_k(u,v,w)$. Note that 
 \[
 T_k(-\cos\theta,-\sin\theta ,w)=T(-\cos(\theta{+}\tfrac{2k\pi }{d}) ,-\sin(\theta{+}\tfrac{2k\pi }{d}) ,w).
 \]
 We write
 \begin{align*}
 T(-\cos\theta ,-\sin\theta ,w)&=w^d{+}t_1(\theta )w^{d-1}{+}\cdots{+}t_d(\theta ),\\
S(-\cos\theta ,-\sin\theta ,w)&=\frac{1}{d}\sum_{k=0}^{d-1}T(-\cos(\theta{+}\tfrac{2k\pi }{d}) ,-\sin(\theta{+}\tfrac{2k\pi }{d})  ,w)\\
&=w^d{+}s_1(\theta )w^{d-1}{+}\cdots{+}s_d(\theta ).
 \end{align*}
Recall that $T(-\cos\theta ,-\sin\theta ,w)=\det(w\,I{-}\tfrac12(e^{-i\theta }A{+}e^{i\theta }A^*))$, therefore we can write
\[
t_1(\theta )=t_{10}e^{-i\theta }{+}t_{11}e^{i\theta} , \quad 
t_2(\theta )=t_{20}e^{-2i\theta }{+}t_{21}{+}t_{22}e^{2i\theta },\quad t_k(\theta )=\sum_{j=0}^k t_{kj}\,e^{-(k-2j)i\theta}.
\]
Clearly, $S(-\cos\theta ,-\sin\theta ,w)$, $s_1(\theta ),\dots,s_d(\theta )$ are invariant by the translation $\theta \mapsto \theta {+}\frac{2\pi }d$. Therefore they are functions of $e^{id\theta }$ whence, if $j<d$, we have $s_j(\theta )=\sigma _j$ (constant);  if $j=d$, $s_d(\theta )=s_{d0}\,e^{-id\theta }{+}\overline {s_{d0}}\,e^{id\theta }{+}\sigma _d$. It is easily seen that $s_{d0}=t_{d0}$, and
from $t_d(\theta )=\det(\tfrac12(e^{-i\theta }A{+}e^{i\theta }A^*))$, we deduce $t_{d0}=\frac1{2^d}\det(A)$. 

From the invariance by rotation of $W(A)$, the largest root of $T(-\cos\theta ,-\sin\theta ,\cdot)$
is a root of $T_k(-\cos\theta ,-\sin\theta ,\cdot)$ thus a root of $S(-\cos\theta ,-\sin\theta ,\cdot)$.
This shows that, if $u^2{+}v^2=1$, then $w_M(u,v)$ ($=w_M(\theta )$ for some $\theta $) is a common root for 
$T(u,v,w)$ and $S(u,v,w)$; from the homogeneity, we deduce that $w_M(u,v)$ is a common root for all $u,v$. Now, we turn to show that $S(u,v,\cdot)=T(u,v,\cdot)$. These two polynomials share the root $w_M$, hence, considering them as polynomials with coefficients in the field of rational functions in $u,v$, they have a g.c.d., $Q(u,v,w)$, which is a unitary polynomial of degree $r$ in $w$, $1\leq r\leq d$. Since the ring $\R[u,v]$ of polynomials is factorial, we deduce that $Q(u,v,w)$ is a homogeneous polynomial of degree $r$ in $u,v,w$.\medskip

Remark that $w_M(\theta)$ is a continuous function in $\theta$, periodic of period $2\pi/d$\,: value $w_{min}$ is obtained  at $d$ different angles and the maximum $w_{max}$ for $d$ other values. Therefore, for each intermediate $x$ with
$w_{min}\leq x<w_{max}$, there exist $2d$ different values of $\theta\in [0,2\pi)$ such that 
$w_M(\theta)=x$. In other terms the system
\begin{align*}
Q(u,v,x)=0,\qquad u^2+v^2=1,
\end{align*}
has $2d$ different solutions and a finite total number of solutions, which requires that the polynomial 
$Q(u,v,x)$ be of degree at least $d$ with respect to $u,v$, and therefore $T=Q=S$.
We write $T=S$ in homogeneous form 
\[
T(u,v,w)=w^d+\sum_{k\leq d/2}a_{2k}(u^2{+}v^2)^kw^{d-2k}+\Big(\big(\tfrac{u{-}iv}2\big)^d\det(A)+\big(\tfrac{u{+}iv}2\big)^d\det(A^*)\big),
\]
and note that $\det(wI{-}A)=T(-1,-i,w)=w^d-(-1)^{d-1}\det A$, then we deduce the property
$A^d=(-1)^{d-1}\det(A)I$ from the Cayley-Hamilton Theorem.\medskip
\end{proof}

\noindent{\bf Remark.} {\it If $\det(A)=0$ and $d\geq 3$, we still have $A^d=0$, but there exist matrices $d\times d$ invariant by the rotation centered at $0$ of angle $2\pi /d$ which do not satisfy Condition $(C_d)$. For instance with $A=\begin{pmatrix}0 &2 &0\\0 &0 &0\\0 &0 &a\end{pmatrix}$, $a\in(0,1]$, $W(A)$ is the closed unit disk and we have
$\det(\tfrac12(e^{-i\theta }A{+}e^{i\theta }A^*){+}w\,I))=w^3+a\cos\theta\ w^2-w-a\cos\theta $.}

\section {Proof of Theorem\,\ref{th2}}

We introduce the notations
\begin{align*}
B(\theta )=\frac 12(e^{-i\theta}A{+}e^{i\theta}A^*),\quad q_k(\theta )=\tr\big(B(\theta )^k\big).
\end{align*}
Recall that $\det(B(\theta ){+}wI)=w^d{+}t_1(\theta )w^{d-1}{+}\cdots{+}t_d(\theta )$.
Thus, if $\mu_1(\theta ),\dots,\mu_d(\theta )$ denote the eigenvalues of $B(\theta )$, then
\begin{align*}
t_k(\theta )&=\sum_{i_1<\cdots<i_k}\mu_{i_1}(\theta )\cdots\mu_{i_k}(\theta ),\quad
q_k(\theta )=\sum_{j=1}^d(\mu_j(\theta ))^k.
\end{align*}
Thus, we have the relations
\begin{align*}
&q_1(\theta)-t_1(\theta)=0,\\
&q_k(\theta)+\sum_{j=1}^{k-1}(-1)^jt_j(\theta) \,q_{k-j}(\theta)+(-1)^k k\,t_k(\theta)=0,\quad\hbox{if  }1\leq k\leq d.
\end{align*}
It is easily seen by recursion from these relations that ``$t_k(\theta )$ constant for $k=1,\dots,d{-}1$" is equivalent to ``$q_k(\theta )$ constant for $k=1,\dots,d{-}1$". When this is realized, then
``$t_d(\theta )$ only depends on $e^{\pm id\theta}$" is clearly equivalent to ``$q_d(\theta)$ only depends on $e^{\pm id\theta}$". Furthermore, it is clear  that the term in $e^{-id\theta }$ in 
$\tr\big((e^{-i\theta }A{+}e^{i\theta }A^*)^d\big)$ is $e^{-id\theta }\tr(A^d)$. Altogether, this proves Theorem\,2.\medskip

\noindent{\bf Remark.} {\it We have also the relations
\[
q_k(\theta)+\sum_{j=1}^{k-1}(-1)^jt_j(\theta) \,q_{k-j}(\theta)=0,\quad\hbox{ for }k>d.
\]
Therefore we deduce that Condition $(C_d)$ implies for all $k>d$ : the trigonometric polynomial 
$\tr\big((e^{-i\theta}A{+}e^{i\theta}A^*)^k\big)$ has only terms in $e^{ijd\theta}$ with $j\in\Z$.
}\medskip

Note that the condition $A^d=(-1)^{d-1}\det(A)\,I$ implies $\tr A^k=0$, for $1\leq k\leq d-1$.
Therefore we deduce easily that, if $A\in\C^{d,d}$ and $\det(A)\neq 0$, then

$\bullet$ If $d=3$, then $A$ is invariant by the rotation centered at $0$ of angle $2\pi /3$ if and only if
$A^3=\det(A)\,I$ and $\tr(A^*A^2)=0$,

$\bullet$ If $d=4$, then $A$ is invariant by the rotation centered at $0$ of angle $\pi /2$ if and only if
$A^4=-\det(A)\,I$, $\tr(A^*A^2)=0$, and $\tr(A^*A^3)=0$.

$\bullet$ If $d=5$, then $A$ is invariant by the rotation centered at $0$ of angle $2\pi /5$ if and only if
$A^5=-\det(A)\,I$, $\tr(A^*A^2)=0$, $\tr(A^*A^3)=0$, $\tr(A^*A^4)=0$ and  $\tr(A^{*2}A^3)+\tr(A^*AA^*A^2)=0$.

\section{About the boundary of the numerical range}
  
  In this section, we assume that $\det(A)\neq 0$ and that Condition $(C_d)$ is satisfied; thus the numerical range is not a disk. Since we  have $W(\lambda \,A)=\lambda W(A)$,  there will be no loss of generality to assume $\det(A)=(-1)^{d-1}$, thus $A^d=I$. Recall that $w_M(\theta )$ is the largest root of $T(-\cos\theta ,-\sin \theta ,w)$, i.e. the largest solution of $P(w)=\frac{1}{2^{d-1}}\cos(d\theta )$, and the largest eigenvalue of $\frac12(e^{-i\theta}A{+}e^{i\theta }A^*)$, whence the function $w_M(\theta )$ is continuous, and periodic of period $2\pi /d$; this function is also even since $W(A)$ is symmetric with respect to the real axis. We have seen that $\partial W(A)=\{\zeta (\theta )\,; \theta \in\R\}$ where $\zeta (\theta )=W(A)\cap\Delta _\theta $, thus $\zeta (\theta {+}\frac{2\pi }d)=e^{2i\pi /d}\zeta (\theta )$ and $\zeta (-\theta )=\overline{\zeta (\theta )}$. Therefore, for describing the global properties of $\zeta (\theta )$, it often will be sufficient to consider its behavior on the interval $[-\frac\pi d,\frac\pi d]$ or on the interval $[0,\frac{2\pi }d]$. 
  
 Now, we introduce the generalized Crawford number $w_*=\min\{|z|\,; z\in\partial W(A)\}$. Clearly, this minimum is realized by some $z_*\in\partial W(A)$ and there exists $\theta_*$ such that $w_*=w_M(\theta_*)=\min\{w_M(\theta )\,; \theta \in\R\}$. Note that $w_*$ is the largest of the $d$ roots of $P(w)=\frac{1}{2^{d-1}}\cos(d\theta^*)$ which are real, thus the $d{-}1$ roots of $P'$ are real and less than or equal to $w_*$. Since $P$ is unitary, we deduce
 $P'(w)>0$ for all $w>w_*$ and $P'(w_*)\geq 0$.
 From $w_*\leq w_M(\theta )$, we get $P(w_*)\leq P(w_M(\theta ))=\frac{1}{2^{d-1}}\cos(d\theta )$ for all $\theta $, therefore the minimum $w_*=\min\{w_M(\theta )\,; \theta \in\R\}$ is realized if and only if $\theta =\frac\pi d$ mod\big($\frac{2\pi }d\big)$.
  
If $\partial W(A)$ has a corner, thus $d$ corners, then these corners are eigenvalues of $A$ (see for instance \cite[Theorem\,13]{kip2}) whence they are the $d$-roots of the unit since $A^d=I$, and 
$A$ is unitarily similar to $\diag(1,e^{2i\pi /d},\dots,e^{2(d-1)i\pi /d})$. Then, $W(A)$ is the polygon with these eigenvalues as vertices. From now on, we will assume that $\partial W(A)$ has no corner.

From the implicit function theorem applied to the equation $P(w)=\frac{1}{2^{d-1}}\cos(d\theta )$, if we have $P'(w_M(\theta ))\neq 0$ (thus $P'(w_M(\theta ))>0$) then the function $w_M(\theta )$ is analytic in $\theta $. From $P'(w_M(\theta ))w'_M(\theta )=-\frac{d}{2^{d-1}}\sin{d\theta }$, we obtain $w_M'(\theta )>0$, thus $w_M$ is increasing from $w_*$ to $w_M(0)$ in the interval $(-\frac\pi d,0)$ and it is decreasing ($w_M'(\theta )<0$) from $w_M(0)$ to $w_*$ on $(0,\frac\pi d)$. Now, if $z=x{+}iy\in \zeta (\theta )= \Delta _\theta \cap W(A)$, then we have $x\,\cos\theta {+}y\,\sin\theta =w_M(\theta)$ and $x\,\cos(\theta {+}\varepsilon){+}y\,\sin(\theta{+}\varepsilon ) \leq w_M(\theta{+}\varepsilon )$, for all $\varepsilon $ since $z\in W(A)\subset\Pi _{\theta +\varepsilon }$. From $\cos(\theta {+}\varepsilon )-\cos \theta =-\varepsilon \sin\theta {+}\textrm{O}(\varepsilon ^2)$ and $\sin(\theta {+}\varepsilon )-\sin \theta =\varepsilon \cos\theta {+}\textrm{O}(\varepsilon ^2)$,
 we get $-\varepsilon\,x\,\sin\theta +\varepsilon y\cos\theta \leq \varepsilon w_M'(\theta ){+}
  \textrm{O}(\varepsilon ^2)$. Taking the limit as $\varepsilon \to 0$ with $\varepsilon >0$ and with $\varepsilon <0$ we deduce $-x\,\sin\theta + y\cos\theta=w_M'(\theta )$. Therefore we obtain that $z=x{+}iy=(w_M(\theta ){+}i\,w_M'(\theta ))e^{i\theta }=\zeta (\theta )$ is the unique point of $\Delta _\theta \cap W(A)$. In particular we have $\zeta (0)=w_M(0)$ and, if 
$P'(w_*)\neq 0$, $\zeta(\frac\pi d)=w_*e^{i\pi /d}$. Furthermore (with $ B(0,r):=\{z\in\C\,; |z|\leq r\}$)
\[
B(0,w_*)\subset W(A)\subset B( 0,w^*),\quad \textrm{where } w^*=w_M(0)=\max\{w_M(\theta )\}.
\] 

Now, on the interval $(-\frac\pi d,\frac\pi d)$, we use the branch $a(\theta )$ of $\arg(\zeta (\theta ))$ which is continuous and satisfies $a(0)=0$. From $\zeta (\theta )=(w_M(\theta ){+}i\,w_M'(\theta ))e^{i\theta }$, we have $\theta -\frac\pi 2<a(\theta )<\theta $ if $0<\theta <\frac\pi d$ since
$w'_M(\theta )<0$; on the other side, since $w^*$ belongs to the half-plane $\Pi _\theta $, it holds
\[
w^*\cos\theta \leq w_M(\theta )=|\delta (\theta)|\cos(\theta {-}a(\theta ))<w^*\cos(\theta {-}a(\theta )),
\]
which shows that $0<a(\theta )<\theta $. Similarly, we have $\theta <a(\theta )<0$ if $-\frac\pi d<\theta <0$.
Then, we remark that the map $\theta \mapsto a(\theta)$ is one to one; indeed, since $W(A)$ is convex and $0$ is interior to $W(A)$, then $a(\theta_1)=a(\theta_2)$ implies $\zeta (\theta _1)=\zeta (\theta _2)$ which induces $\theta _1=\theta _2$ since $\zeta (\theta _1)$ is not a corner. This shows that $a(\theta )$ is monotone and thus increasing with $\theta$. A simple calculation shows that $a'(\theta )=\frac{w_M(\theta )(w_M(\theta )+w_M''(\theta ))}{w_M(\theta )^2+w_M''(\theta )^2}$, whence we obtain
$w_M(\theta )+w_M''(\theta)\geq 0$ on $(-\frac\pi d,\frac\pi d)$. Now we consider 
$\frac d{d\theta }|\delta (\theta )|^2=2\,w'_M(\theta )(w_M(\theta )+w_M''(\theta )).$
This shows that $|\delta (\theta )|$ is increasing on $(-\frac\pi d,0)$ and decreasing on $(0,\frac\pi d)$. 

We have the following lemma  (recall that $\partial W(A)=\{\zeta (\theta )\,; \theta \in\R\}$).

 \begin{lemma}  \label{lem3} 
 If $P'(w_*)\neq 0$. Then the function $w_M(\theta )$ is analytic, even, and periodic of period $2\pi /d$. The boundary of the numerical range is described by $\zeta(\theta )=(w_M(\theta ){+}i\,w_M'(\theta ))e^{i\theta }$. The argument of $\zeta (\theta )$ has a determination which is $C^\infty$ and increasing in $\theta $. The $C^\infty$ map $\theta \mapsto |\zeta (\theta )|$ increases from $w_*$ to $w^*$ on $(-\frac\pi d,0)$ and decreases from $w^*$ to $w_*$ on $(0,\frac\pi d)$.
 
If $P'(w_*)= 0$ and if $W(A)$ has no corner. Then the function $w_M(\theta )$ is continuous, even, and periodic of period $2\pi /d$, its restriction to the segment $[-\frac\pi d,\frac\pi d]$ is analytic. The boundary of the numerical range is described by :  if $\theta \neq \pm\frac\pi d$, then $\zeta(\theta )$ is the point $(w_M(\theta ){+}i\,w_M'(\theta ))e^{i\theta }$ and, if $\theta = \frac\pi d$, then $\zeta(\frac\pi d )$ is the line segment $\{(w_*{+}itw_*')e^{i\pi /d }\,; -1\leq t\leq 1\}$, where $w_*'$ is the left limit in $\frac\pi d$ of $w_M'(\theta)$. The argument of $\zeta (\theta )$ has a determination which is $C^\infty$ and increasing on the interval $(-\frac\pi d,\frac\pi d)$. The $C^\infty$ map $\theta \mapsto |\zeta (\theta )|$ increases from $\sqrt{w_*^2+w_*'^2}$ to $w^*$ on $(-\frac\pi d,0)$ and decreases from $w^*$ to  $\sqrt{w_*^2+w_*'^2}$ on $(0,\frac\pi d)$.

If $W(A)$ has corners. Then $w_M(\theta)=\cos\theta $, $\zeta (\theta )=1$ is constant on the interval $(-\frac\pi d,\frac\pi d)$ and $\zeta (\frac\pi d)$ is the line segment $[1,\exp(\frac{2i\pi}d)]$; furthermore $P'(w_*)=0$.
\end{lemma}
\begin{proof} It remains to look at the case $P'(w_*)= 0$; we first assume that $W(A)$ has no corner.
Since $P'(w_M)\neq 0$, $w_M$ is analytic on the interval $(-\frac\pi d,\frac\pi d)$ and $\delta (\theta )=
(w_M(\theta ){+}i\,w_M'(\theta ))e^{i\theta }$ describes a part of the boundary $\partial W(A)$ which is contained in the sector $\{z\in\C\,; |\arg(z)|\leq \frac\pi d\}$. As before, $\arg(\zeta (\theta))$ increases on $(-\frac\pi d,\frac\pi d)$, $|\zeta (\theta )|$ increases on $(-\frac\pi d,0)$ and decreases on $(0,\frac\pi d)$.

 We now consider the situation close to the value $\frac\pi d$; for that
we set $\theta =\frac\pi d{+}\varepsilon $. The solutions of $P(w(\theta ))=\frac{1}{2^{d-1}}\cos(d\theta )$
close to $w_*=w_M(\frac\pi d)$ satisfy
\[
P(w(\theta )){-}P(w_*)=2^{2-d}\sin^2\tfrac{d\varepsilon }2=\tfrac12\,P''(w_*)(w(\theta ){-}w_*)^2+\textrm{O}((w(\theta ){-}w_*)^3).
 \] 
We have $P''(w_*)>0$, indeed otherwise this equation would have non real solutions and (Puiseux theory) there exist two (and only two) solutions $ w_+$ and $w_-$ in a neighbor of $\frac\pi d$, these solutions are real for real $\theta$ and holomorphic. They satisfy 
 \begin{align*}
w_\pm(\theta )&=w_*\pm \gamma_1\tfrac{d\theta {-}\pi }2+ \textrm{O}((\theta {-}\tfrac\pi d)^2),\quad \textrm{with  }\gamma_1=  (2^{d-3}P''(w_*)\big)^{-1/2}, \\[3pt]
w'_\pm(\theta )&=\pm\tfrac d2\gamma_1+ \textrm{O}((\theta {-}\tfrac\pi d)). 
\end{align*} 
Therefore we have $w_M(\theta )=w_{-}(\theta )$, if $\theta\in(-\frac\pi d,\frac\pi d]$ and $w_M(\theta )=w_{+}(\theta )$, if $\theta \in[\frac\pi d,\frac{3\pi} d)$.
This shows that $w_M$ is analytic up to the boundary of the segment $[-\frac\pi d,\frac\pi d]$ and that
$\delta (\theta )=(w_M(\theta ){+}i\,w_M'(\theta ))e^{i\theta }$ (for $\theta \neq \frac\pi d$) has left and right limits $\delta _{\pm}(\frac\pi d)=(w_*\pm i\,w_*')e^{i\pi /d}$ when $\theta \to \frac\pi d$.
Clearly, $\zeta (\frac\pi d)$ is the line segment $\{(w_*{+}itw_*')e^{i\pi /d }\,; -1\leq t\leq 1\}$.

If $W(A)$ has corners, then $A$ is unitarily similar to $\diag(1,e^{2i\pi /d},\dots,e^{2(d-1)i\pi /d})$, hence
$P(w)=\prod_{j=0}^{d-1}\big(w-\cos(\theta -\frac{2j\pi}d)\big)$ which shows that $w_M(\theta )=\cos\theta $ if $\theta \in(-\frac\pi d,\frac\pi d)$ and that $2^{d-1}P(\cos\theta )=\cos(d\theta )$ (i.e. $2^{d-1}P$ is the Chebyshev polynomial of degree $d$). Therefore, we easily obtain that $P'(w_*)=0$, $\zeta (\theta )=1$ is constant and $\zeta (\frac\pi d)$ is the line segment $[1,\exp(\frac{2i\pi}d)]$.

\end{proof}

\begin{corollary}
If $A\in\C^{d,d}$ satisfies the condition $(C_d)$, then $W(A)$ has a line segment on the boundary if and only if the largest root of $P(\cdot){-}P(w_*)$ is equal to the largest root of $P'$.
\end{corollary}
  
\noindent{\bf Remark.}{\it A particular case where line segments occur on the boundary is when $A$ is unitarily similar to a direct sum $A=A_1\oplus\cdots \oplus A_{d_2}$ with $A_j=\exp(\frac{2\pi (j-1)}d)A_1$, $j=1,\dots,d_2$, and $A_1\in\C^{d_1,d_1}$ invariant by rotation of angle $\frac{2\pi }{d_1}$, $\det(A_1)=(-1)^{d_1-1}$; thus $d=d_1d_2$. Then, $W(A)$ is the convex hull of $W(A_1)\cup\cdots\cup W(A_{d_2})$, and it is easily seen that $\partial W(A)$ has a straight part in each sector $\arg(z) \in(\frac{2k\pi}d,\frac{2(k+1)\pi}d)$, $k=0,\dots,d{-}1$. But line segments may also appear with irreducible matrices satisfying $(C_d)$, examples with $d=5$ will be given at the end of the next section.}\medskip

We turn now to other descriptions of the boundary $\partial W(A)$. First we look at the polar form
\[
\partial W(A)=\{z\,; z=r(\psi )\,e^{i\psi }, \ \psi \in\R\},
\] 
where $r$ is an even, $\frac{2\pi }d$ periodic, continuous, real-valued function. Then, it is easily seen
from the properties of $\zeta (\theta )$ that\smallskip

$\bullet$ if $P'(w_*)\neq 0$, then $r(\psi )$ is an analytic function of $\psi $ and
$r(\cdot)$ is a decreasing function on $(0,\frac\pi d)$,\smallskip

$\bullet$ and if $P'(w_*)=0$, then $r(\psi )$ is an analytic function of $\psi $
for all $\psi \in [-\psi _d,\psi _d]$ with $\psi _d=\frac\pi d{-}\arctan\frac{\gamma_1}{w_1}$. In the straight part $\psi \in [\psi _d,\frac{2\pi} d{-}\psi_d]$ we have $r(\psi )=w_1/\cos(\frac\pi d{-}\psi )$
and we have a $C^1$ linking between these two analytic functions left and right to the value $\psi_d$, since the corresponding point is not a corner. Furthermore, $r(\cdot)$ is a decreasing function on $(0,\frac\pi d)$.\medskip

Another natural parametrization of the boundary $\partial W(A)$ is given via the Riemann mapping $\sigma  $ from  the closed unit disk onto $W(A)$ such that $\sigma  (0)=0$ and $\sigma '(0)>0$,
\[
\partial W(A)=\{\sigma(e^{i\,t})\,; t \in\R\}. 
\]
Clearly, there is a one to one correspondence (modulo $2\pi $) between $\psi  $ and $t$ 
corresponding to the same boundary point $r(\psi )\,e^{i\psi }=\sigma (e^{it})$; in particular $\psi =0$ corresponds to $t=0$ and $\psi =\frac\pi d$ corresponds to $t=\frac\pi d$. This implies that the function $|\sigma (e^{it})|$ is decreasing on the interval $(0,\frac\pi d)$. In cases where $\psi _d$ is defined, we denote as $t_d$ the corresponding value such that $r(\psi _d)e^{i\psi _d}=\sigma (e^{it_d})$.
We deduce the following lemma.

\begin{lemma}\label{lem4} We assume that $A\in\C^{d,d}$ has a numerical range invariant by the rotation centered at $0$ of angle $2\pi /d$, that $(-1)^{d-1}\det(A)>0$ and that $W(A)$ has no corners.
Then, the functions $\psi \mapsto r(\psi )$ and $t\mapsto e^{-it}\sigma (e^{it})$ are even $\frac{2\pi }d$ periodic functions; the functions $\psi \mapsto |r(\psi )|$ and $t\mapsto|\sigma (e^{it})|$ are
 decreasing on the interval $[0,\frac\pi d]$. If the boundary $\partial W(A)$ has no straight part,
 these functions are analytic, otherwise there exists $\psi _d$ and $t_d\in(0,\frac\pi d)$ such that $r$ is analytic on $[-\psi _d,\psi _d]$ and on $[\psi _d,\frac{2\pi }d-\psi _d]$ with a $C^1$ junction in $\psi _d$, and 
 $\sigma (e^{i\cdot})$ is analytic on $[-t_d,t_d]$ and on $[t_d,\frac{2\pi }d-t_d]$ with a $C^1$ junction in $t_d$, the intervals $[\psi _d,\frac{2\pi }d-\psi _d]$ and $[t_d,\frac{2\pi }d-t_d]$ corresponding to the straight parts. 
 \end{lemma}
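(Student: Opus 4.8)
\medskip
\noindent\textbf{Plan of proof.} The idea is to assemble the statement from the facts already obtained in the discussion preceding the lemma, adding only the classical boundary regularity of the Riemann map. Replacing $A$ by a suitable positive multiple, I may assume $\det(A)=(-1)^{d-1}$, the normalization of Section\,4; this affects none of the assertions. Two preliminaries will be used. First, $W(A)$ is convex, invariant under the rotation of angle $2\pi/d$ about $0$, and $\ne\{0\}$ since $\det A\neq0$; as $d\ge3$, any nonzero $z\in W(A)$ gives the regular $d$-gon $\mathrm{conv}\{e^{2ik\pi/d}z:k\in\Z\}\subseteq W(A)$, so $0$ lies in the interior of $W(A)$; hence the polar representation $r$ is single-valued and continuous, and the normalized Riemann map $\sigma$ exists. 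Second, $w_M$ being even (Lemma~\ref{lem3}), the family $\{\Pi_\theta\}$, and hence $W(A)=\bigcap_\theta\Pi_\theta$, is invariant under $z\mapsto\overline z$; combined with the rotation invariance, $W(A)$ is symmetric in the line $\arg z=\tfrac\pi d$. Finally, the three boundary parametrizations are matched by orientation-preserving homeomorphisms $\theta\leftrightarrow\psi\leftrightarrow t$ sending $0$ to $0$ and $\tfrac\pi d$ to $\tfrac\pi d$ (the latter because, by that symmetry, the boundary point in the direction $\tfrac\pi d$ is $w_M(\tfrac\pi d)\,e^{i\pi/d}$).

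\emph{Evenness, periodicity, monotonicity.} For $r$, single-valuedness of the polar representation and the two invariances of $W(A)$ give immediately $r(\psi{+}\tfrac{2\pi}d)=r(\psi)$ and $r(-\psi)=r(\psi)$. For $\sigma$, I would invoke uniqueness of the normalized Riemann map for the two maps $w\mapsto e^{-2i\pi/d}\sigma(e^{2i\pi/d}w)$ and $w\mapsto\overline{\sigma(\overline w)}$, each sending $\overline\D$ onto $W(A)$, fixing $0$, with positive derivative at $0$; this yields $\sigma(e^{2i\pi/d}w)=e^{2i\pi/d}\sigma(w)$ and $\overline{\sigma(\overline w)}=\sigma(w)$. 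The first makes $g(t):=e^{-it}\sigma(e^{it})$ periodic of period $\tfrac{2\pi}d$; the second gives $g(-t)=\overline{g(t)}$, the appropriate ``evenness'' here. For monotonicity, recall that on $(0,\tfrac\pi d)$ we have shown $w_M'<0$, $|\zeta(\theta)|$ decreasing and $\arg\zeta(\theta)$ strictly increasing; as $\theta\mapsto\psi$ and $\psi\mapsto t$ are then increasing homeomorphisms of $[0,\tfrac\pi d]$ onto itself, $\psi\mapsto r(\psi)=|\zeta(\theta(\psi))|$ and $t\mapsto|\sigma(e^{it})|$ are decreasing on $[0,\tfrac\pi d]$.

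\emph{Analyticity.} Here I would use the classical fact that $\partial W(A)$ is a finite union of real-analytic arcs and straight segments meeting at finitely many corner points. By hypothesis there are no corners, so $\partial W(A)$ is a $C^1$ Jordan curve made of analytic arcs and segments joined $C^1$-ly. If it has no straight part, it is a single analytic Jordan curve; then $r$ is analytic, since every ray from the interior point $0$ meets $\partial W(A)$ transversally so that $\psi$ is an analytic function of an analytic parameter of the curve, and $t\mapsto\sigma(e^{it})$ is analytic by Schwarz reflection, the conformal map onto a Jordan domain with analytic boundary continuing holomorphically across $\partial\D$. If it has a straight part, I combine the description of $\zeta$ and $r$ already obtained: in $\{|\arg z|\le\tfrac\pi d\}$ the boundary is the analytic arc $\{\zeta(\theta):|\theta|\le\tfrac\pi d\}$, analytic in polar form on $[-\psi_d,\psi_d]$ with $\psi_d=\tfrac\pi d-\arctan\tfrac{\gamma_1}{w_1}$, joined at $\zeta_\pm(\tfrac\pi d)$ without a corner (hence $C^1$) to the segment, on which $r(\psi)=w_1/\cos(\tfrac\pi d-\psi)$ is analytic. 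Setting $t_d$ by $\sigma(e^{it_d})=r(\psi_d)\,e^{i\psi_d}$, the Riemann map then continues analytically across $\{e^{it}:|t|<t_d\}$ by reflection in the analytic arc and across $\{e^{it}:t_d<t<\tfrac{2\pi}d-t_d\}$ by reflection in the segment, and, $\partial W(A)$ being $C^1$, $\sigma\in C^1(\overline\D)$ by Kellogg's theorem, so the junction at $t_d$ is $C^1$. Transporting through $\theta\leftrightarrow\psi\leftrightarrow t$ gives exactly the claimed structure.

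The symmetry and monotonicity steps merely repackage earlier material; the substantive part, and the main obstacle, is the boundary behavior of $\sigma$: one must invoke with care the Kellogg--Warschawski theorem, giving $\sigma\in C^1(\overline\D)$ and hence the $C^1$ junction at $t_d$, and the Schwarz reflection principle, giving analytic continuation across the analytic arc and across the line segment and thus upgrading $C^1$ to analytic away from $t_d$. A secondary delicate point is that, off the straight parts, $\partial W(A)$ is genuinely real-analytic and not merely $C^1$ --- equivalently, the radius of curvature $w_M+w_M''$ does not vanish there, so no flat point of type $y\sim x^{4/3}$ occurs --- which is where the classical analyticity of the boundary generating curve of a matrix is needed.
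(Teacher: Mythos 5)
Your proposal is correct and follows essentially the same route as the paper, which states Lemma~\ref{lem4} without a separate proof, as a direct consequence of the Section~4 analysis of $\zeta(\theta)$, the polar form, and the $\psi\leftrightarrow t$ correspondence; you merely make explicit the standard Riemann-map ingredients (uniqueness of the normalized map for the two symmetries, Carath\'eodory/Kellogg for the $C^1$ junction, Schwarz reflection for analyticity) that the paper leaves implicit. Your reading of ``even'' for $t\mapsto e^{-it}\sigma(e^{it})$ as $g(-t)=\overline{g(t)}$ is the right interpretation of the statement, and your flagged caveat about regularity ($w_M+w_M''\neq 0$) is a genuine subtlety that the paper itself also passes over.
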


 \section{Weighted shift matrices}\label{sec5}
 In this section $A$ is a $3\times 3$ matrix and $T(u,v,w)$ is the corresponding polynomial defined in \eqref{defT}.
 \begin{lemma}\label{lem6}
 We assume that $d=3$ and $\det(A)>0$. Then,
the numerical range $W(A)$ is invariant by the rotation of angle $2\pi/3$ centered at 0  if and only if $A$ is unitarily equivalent to a matrix of the form 
\begin{equation*}
M(\alpha_1,\alpha_2,\alpha_3)=e^{i\theta }\,\begin{pmatrix} 0 &\alpha_1 &0\\0&0&\alpha_2\\
\alpha_3&0&0
\end{pmatrix} \quad\textrm{with}\quad \alpha_1>0,\alpha_2>0,\alpha_3>0, \theta \in\R.
\end{equation*}
\end{lemma}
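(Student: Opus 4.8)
The plan is to exploit the $d=3$ specialization of Theorem~\ref{th2} recorded just before Section~4: when $\det(A)\neq 0$, $W(A)$ is invariant by the rotation of angle $2\pi/3$ if and only if $A^3=\det(A)\,I$ and $\tr(A^*A^2)=0$ (the condition $\tr A=\tr A^2=0$ being automatic from $A^3=\det(A)I$). So the task reduces to showing that a $3\times3$ matrix $A$ with $\det(A)>0$ satisfying these two algebraic constraints is unitarily equivalent to some $M(\alpha_1,\alpha_2,\alpha_3)$, and conversely. The converse direction is the easy one: for $M=M(\alpha_1,\alpha_2,\alpha_3)=e^{i\theta}D P$ with $P$ the cyclic permutation $(1\,2\,3)$ and $D=\diag(\alpha_1,\alpha_2,\alpha_3)$, one computes $M^3=e^{3i\theta}\alpha_1\alpha_2\alpha_3\,I=\det(M)\,I$, and $\tr(M^*M^2)$ is a sum of products of entries that vanishes because $M^*M^2$ has zero diagonal (the supports of $M^*$ and $M^2$ are disjoint permutation patterns). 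Hence $M$ satisfies $(C_3)$; one could alternatively just cite Choi~\cite{choi}.

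For the forward direction, I would first use $A^3=\det(A)I=:\rho^3 I$ with $\rho>0$ to pin down the spectrum: the eigenvalues of $A$ are among $\{\rho,\rho\omega,\rho\omega^2\}$ with $\omega=e^{2i\pi/3}$, and since $\tr A=0$ they must be exactly these three distinct values, so $A$ is diagonalizable with simple spectrum. After scaling (replacing $A$ by $\rho^{-1}A$) we may assume $\rho=1$, so $A^3=I$ and $A$ has eigenvalues $1,\omega,\omega^2$. Now I would bring in the second condition. The key structural fact to extract is that $A$ is unitarily similar to a \emph{weighted cyclic shift}: a matrix whose only nonzero entries sit on the cyclic superdiagonal pattern $(1,2),(2,3),(3,1)$. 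Given such a form $A=UP$ (with $U$ unitary up to the permutation, i.e. $A=DP$ after a further diagonal unitary normalization, using that one may rotate each basis vector by a phase), the factorization $M(\alpha_1,\alpha_2,\alpha_3)=e^{i\theta}DP$ with $\alpha_j>0$ follows by writing each entry in polar form and absorbing phases; the common factor $e^{i\theta}$ is forced so that $\det(DP)=\alpha_1\alpha_2\alpha_3>0$ matches $\det(A)=1$ up to the cube-root-of-unity phase.

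The main obstacle — and the heart of the argument — is proving that the two conditions $A^3=I$ and $\tr(A^*A^2)=0$ force $A$ to be unitarily equivalent to a matrix supported on the single cyclic pattern. I would approach this as follows. Work in an orthonormal eigenbasis is the wrong move (it diagonalizes $A$ but $A^*$ becomes unpleasant); instead, I expect the right normal form to come from the observation that $\{I,A,A^2\}$ is a basis for the commutant structure, and $A^*$ must be a polynomial-like combination. Concretely, since $A$ is invertible with $A^3=I$, we have $A^{-1}=A^2$, and $A^*A^2$ is the matrix whose trace must vanish. I would diagonalize $A$ as $A=V\diag(1,\omega,\omega^2)V^{-1}$, write $G=V^*V$ (a positive definite Gram matrix), and translate $\tr(A^*A^2)=0$ into a condition on $G$ and the diagonal phases; one finds that this condition, combined with the rigidity of having three equally-spaced eigenvalues on the unit circle, forces $G$ to have a specific circulant-compatible structure, which is exactly equivalent to $A$ having the claimed weighted-shift form. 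An alternative and perhaps cleaner route: use the tangential polynomial $T(u,v,w)$ from \eqref{defT} directly — Theorem~\ref{th1} already gives $T(u,v,w)=w^3 - \big((\tfrac{u-iv}{2})^3\det A+(\tfrac{u+iv}{2})^3\overline{\det A}\big)$ (no $w^2$ or $w$ term), and matching the $w^2$-coefficient, which is $\tr(uB+vC)$, and the $w$-coefficient, which is an elementary symmetric function of eigenvalues of $uB+vC$, against zero for all $u,v$ gives $B^2+C^2$ scalar and $\tr(BC)=0$, i.e. $AA^*$ and $A^*A$ have equal traces in a strong sense; from $A^*A+AA^*=cI$ and $A^3=I$ a direct $3\times3$ computation produces the cyclic form. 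I would present the proof via whichever of these computations is shortest, and I expect it to be essentially a half-page of linear algebra once the normal-form target is identified.
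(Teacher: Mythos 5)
Your converse direction is fine, and your reduction of the necessity direction to the algebraic conditions ``$A^3=\det(A)I$ and $\tr(A^*A^2)=0$'' is the same starting point as the paper's. But the step you yourself call ``the heart of the argument'' is not actually proved. Route (a) ends with ``one finds that this condition \dots forces $G$ to have a specific circulant-compatible structure'', which is precisely the assertion to be established; no computation is given, and it is far from obvious that the single scalar identity $\tr(A^*A^2)=0$, fed through the Gram matrix $G=V^*V$, produces the weighted-shift form. Route (b) rests on a false deduction: Theorem~\ref{th1} gives $T(u,v,w)=w^3+a_2(u^2{+}v^2)\,w+\big((\tfrac{u-iv}{2})^3\det A+(\tfrac{u+iv}{2})^3\overline{\det A}\big)$, so there \emph{is} a $w$-term, and in any case the $w$-coefficient is the scalar second elementary symmetric function of the eigenvalues of $uB+vC$; requiring it to be of the form $a_2(u^2{+}v^2)$ yields only the trace identities $\tr(BC)=0$ and $\tr B^2=\tr C^2$ (i.e.\ $\tr A^2=0$), not that the matrix $B^2+C^2=\tfrac12(AA^*{+}A^*A)$ is scalar. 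Indeed $A^*A+AA^*=cI$ is false for the target family itself: for $M(\alpha_1,\alpha_2,\alpha_3)$ one has $MM^*{+}M^*M=\diag(\alpha_1^2{+}\alpha_3^2,\ \alpha_1^2{+}\alpha_2^2,\ \alpha_2^2{+}\alpha_3^2)$, scalar only when $\alpha_1=\alpha_2=\alpha_3$. So the ``cleaner route'' cannot work as described.

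For comparison, the paper does not diagonalize $A$; it diagonalizes $A^*A=\diag(\mu_1,\mu_2,\mu_3)$ and uses one \emph{additional} trace identity, $\tr((A^*A)^2A)=0$, extracted from the Remark in Section~3 (the $k=5$ trace condition implied by $(C_3)$, combined with $A^3=\det(A)I$). With $\tr A=0$, $\tr(A^*A^2)=\sum_j\mu_j a_{jj}=0$ and $\tr((A^*A)^2A)=\sum_j\mu_j^2a_{jj}=0$, a Vandermonde argument kills the diagonal of $A$ when the $\mu_j$ are distinct; then the vanishing of the off-diagonal entries of $A^*A$ (orthogonality of the columns of $A$) together with invertibility leaves exactly the two cyclic support patterns, giving $M(\cdot)$ or its transposed variant after absorbing phases, and separate short arguments treat a double or triple singular value (the latter giving a normal $A$, unitarily equivalent to $M(c,c,c)$). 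Your two conditions do imply $(C_3)$ and hence, a posteriori, this extra identity, but as written your sketch neither derives it nor supplies any substitute structural input, so there is a genuine gap at exactly the decisive step. (A minor further point: $\tr A=\tr A^2=0$ is not automatic from $A^3=\det(A)I$ alone --- $A=\lambda I$ satisfies it --- you need $\tr(A^*A^2)=0$ to exclude the scalar cases before concluding the spectrum is $\{\rho,\rho\omega,\rho\omega^2\}$.)
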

\begin{proof}
If $A$ is unitarily equivalent to $M(\alpha_1,\alpha_2,\alpha_3)$, then a simple calculation shows that 
\begin{align*}
T(-\cos\theta ,-\sin\theta ,w)=w^3-\tfrac14(\alpha_1^{\,2}{+}\alpha_2^{\,2}{+}\alpha_3^{\,2})\,w+\frac{\det(A)}4 \,\cos(3\,\theta ),
\end{align*}
whence $W(A)$ is invariant by the rotation of angle $2\pi/3$ centered at $0$. Conversely, if we assume that $W(A)$ is invariant by this rotation, then $A^3=\det(A)\,I$ and
(see Section\,3) this is equivalent
to $\tr(A)=0$, $\tr(A^2)=0$, and $\tr(A^*A^2)=0$. Also, from the remark in Section\,3, the trigonometric polynomial $\tr\big((e^{-i\theta}A+e^{i\theta}A^*)^5\big)$ has no term in $e^{i\theta }$, whence
\begin{align*}
0=\tr((A^*A)^2A+A^{*2}A^3)=\tr((A^*A)^2A)+\tr(A^{*2})=\tr((A^*A)^2A).
\end{align*}

Without loss of generality, we suppose that the matrix $A^*A=\diag(\mu_1,\mu_2,\mu_3)$ is diagonal.
We begin with the case where the $\mu_j$ are all distincts. Then, it follows from
$\tr (A)=0$, $\tr (A^*A^2)=0$ and $\tr ((A^*A)^2A)=0$ that $a_{11}=a_{22}=a_{33}=0$. We write that
$A^*A$ is diagonal, this gives
\begin{align}\label{z}
\overline {a_{12}} a_{13}=0,\quad\overline{ a_{21} }a_{23}=0,\quad \overline{ a_{31}} a_{32}=0.
\end{align}
Note that no column of $A$ may be null since 0 is not eigenvalue of $A$. Therefore, there are two
possibilities:
\begin{align*}
a_{12}\neq0,\quad a_{13}=0,\quad a_{23}\neq0,\quad a_{21}=0,\quad a_{31}\neq0,\quad a_{32}=0
,\\
\hbox{or}\qquad 
a_{13}\neq0,\quad a_{12}=0,\quad a_{32}\neq0,\quad a_{31}=0,\quad a_{21}\neq0,\quad a_{23}=0.
\end{align*}
In the first occurence we have $A=M(a_{12},a_{23},a_{31})$ which is unitarily similar for some $\theta \in\R$ to 
$e^{i\theta }\,M(|a_{12}|,|a_{23}|,|a_{31}|)$, in the second occurence we have 
$A=M(a_{21},a_{32},a_{13})^T$, which is unitarily similar to $M(a_{32},a_{21},a_{13})$.\medskip

We now consider the case where $A^*A$ has a double eigenvalue. We can assume that 
$\mu_1\neq\mu_2=\mu_3$ and $a_{21}=0$, this last relation being realized by replacing, if needed, $A$ by $U^*AU$ with $U$ being a rotation matrix acting only on the two last components.
From $\tr (A)=0$ and $\tr (A^*A^2)=0$ we deduce $a_{11}=0$ and $a_{22}{+}a_{33}=0$.
Since the first column is not null, we have $a_{31}\neq0$. Writing that $0=(A^*A)_{31}=\overline{a_{33}}
a_{31}$, we obtain $a_{33}=0$, whence $a_{22}=0$. 
The relations \eqref{z} are still valid and we conclude as previously.\medskip

Finally, if $\mu_1{=}\mu_2{=}\mu_3$, then the matrix $A$ is normal, thus unitarily equivalent to 
$c\,\diag(1,\jmath,\jmath^2)$ with $c=\det(A)^{1/3}$, which is unitarily equivalent to $M(c,c,c)$.
\end{proof}

More generally, Daeshik Choi \cite{choi} has studied the matrices of the form $M(\alpha_1,\dots,\alpha_d)=P_d\,\diag(\alpha_1,\dots,\alpha_d)$ for any $d\geq 2$, and given a simple proof that these matrices have their numerical ranges invariant by  the rotation of angle $2\pi/d$ centered at 0.
 This property may also be obtained by noticing
 that we can also write $M(\alpha_1,\dots,\alpha_d)=\diag(\alpha_2,\dots,\alpha_d,\alpha_1)\,P_d$. From this, we easily deduce that 
\begin{align*}
\hbox{for all }k=1,\dots,d{-}1,\qquad \tr\big((e^{-i\theta}M+e^{i\theta}M^*)^k\big)=a_{k,0},
\end{align*}
and
\begin{align*}
\tr\big((e^{-i\theta}M+e^{i\theta}M^*)^d\big)=a_{d,0}+e^{-id\theta}\tr(M^d)+e^{id\theta}\tr(M^{*d}),
\end{align*}
for some values of $a_{k,0}$. Then the invariance follows from Theorem\,\ref{th2}.\medskip
 
We continue to assume that $\alpha_j>0$ for all $j$. We have seen that the boundary of $W(M)$ has straight parts iff the greatest root $w_*)$ of the polynomial $Q(w):=P(w)-P( w_*)$
 is also the greatest root of $P'$; therefore, a necessary condition for having straight parts is that the resultant $R(\alpha)=Res(Q,P')=0$. (In the following this resultant will be given up to a multiplication by some constant $\neq 0$.)\medskip
 
 \noindent$\bullet$ {\bf If $d=3$}
\begin{align*}
Q(w)&=w^3-\frac a4\,w+\frac b4,\quad \textrm{with }a=\alpha_1^2{+}\alpha_2^2{+}\alpha_3^2,\ b=\alpha_1\alpha_2\alpha_3,\\
R(\alpha)&=a^3-27b^2=(\alpha_1^2{+}\alpha_2^2{+}\alpha_3^2)^3-27\,\alpha_1^2\alpha_2^2\alpha_3^2.
\end{align*} It is easily seen that $R(\alpha)>0$ unless $\alpha_1=\alpha_2=\alpha_3$.
\smallskip

\noindent$\bullet$ {\bf If $d=4$}
\begin{align*}
Q(w)&=w^4-\frac a4\,w^2+\frac b{16},\quad \textrm{with }a=\alpha_1^2{+}\alpha_2^2{+}\alpha_3^2{+}\alpha_4^2,\ b=(\alpha_1\alpha_3{+}\alpha_2\alpha_4)^2,\\
R(\alpha)&=a^2-4\,b=(a{+}b)((\alpha_1{-}\alpha_3)^2{+}(\alpha_2{-}\alpha_4)^2)^3.
\end{align*} 
Then, the numerical range has straight parts on the boundary iff $\alpha_1=\alpha_3$ and $\alpha_2=\alpha_4$. In this case, we have
 \begin{align*}
T({-}\cos\theta ,{-}\sin\theta ,w)=\big(w^2{-}\frac{\alpha_1^{\,2}{+}\alpha_2^{\,2}}{4}{-}\frac{\alpha_1\alpha_2\cos(2\,\theta )}2\big)\big(w^2{-}\frac{\alpha_1^{\,2}{+}\alpha_2^{\,2}}{4}{-}\frac{\alpha_1\alpha_2\cos(2\,\theta{+\pi } )}2\big),
\end{align*}
and $M(\alpha_1,\dots,\alpha_4)$ is unitarily similar to the direct sum of the matrices $A_1=\begin{pmatrix} 0&\alpha_1\\\alpha_2 &0\end{pmatrix}$ and $A_2=i\,A_1$.\smallskip

\noindent$\bullet$ {\bf If $d=5$}
\begin{align*}
Q(w)&=w^5-\frac a4\,w^3+\frac b{16}\,w+\frac c{16},\quad \textrm{with }\\a&=\alpha_1^2{+}\cdots{+}\alpha_5^2,\ b=\alpha_1^2\alpha_3^2{+}\alpha_2^2\alpha_4^2{+}\alpha_3^2\alpha_5^2{+}\alpha_4^2\alpha_1^2{+}\alpha_5^2\alpha_2^2,\ c=\alpha_1\alpha_2\alpha_3\alpha_4\alpha_5,\\
R(\alpha)&=3125\,c^4-a(27\,a^4-225\,a^2b+500\,b^2)\,c^2+b^3(a^2{-}4b)^2.
\end{align*}
It can be seen by some cumbersome calculi that $R(\alpha)=0$ if $\alpha=(1,1,t,s,t)$ with $t>0$
and $s=1+(t-1)\frac{t+\varepsilon \sqrt{t^2+4}}2$, $\varepsilon =\pm1$. With $\varepsilon =-1$, we have noticed that the boundary of the numerical range has straight parts, but this is not the case if $\varepsilon =1$ and $t\neq 1$; then, $Q$ and $P'$ have a common root, but this is not the largest root of $P'$.

 \section{The dimension $d=4$ case.}
 
  We have seen in the previous section that, if $W(A)$ is invariant by rotation of angle $2\pi/d$ centered at $0$ with $d=3$, then $A$ is unitarily similar to a matrix $M(\alpha_1,\alpha_2,\alpha_3)$. This result does not generalize to $d\geq4$.
 Here we characterize (up to a unitary similarity) the matrices in $\C^{4,4}$ with numerical range invariant by rotation of angle $\frac{2\pi }4$ centered at 0 and not a disk. For that, it suffices to consider the case $\det(A)=-1$, 
 hence $A^4=I$ and $A$ can be written as
 \begin{align}\label{rot4}
 A=D+E,\quad \textrm{with  }D=\diag(1,i,-1,-i)\textrm{ and }E=\begin{pmatrix}
0&\alpha&\beta  &\gamma \\0 &0&\delta  &\varepsilon \\0 &0 &0 &\varphi \\0 &0 &0 &0
\end{pmatrix}
 \end{align}
with $\alpha\geq 0$, $\beta \geq 0$, $\gamma \geq 0$, $\delta ,\varepsilon ,\varphi \in\C$ ( if $\alpha\beta \gamma =0$ we can also assume $\varphi \leq 0$ or $i\varepsilon\leq 0$ ).
Indeed, the other matrices with this invariance property will be unitarily similar to  such $cA$ for some $c\neq 0$. We have seen that $A$ has this invariance if and only if\,: $\tr(A^*A^2)=0$ and
$\tr(A^*A^3)$=0. It is easily seen that
\begin{align*}
\tr(A^*A^2)&=\tr(E^*ED{+}EE^*D{+}E^*E^2),\\
\tr(A^*A^3)&=\tr(E^*D^2E{+}E^*DED{+}E^*ED^2{+}E^*DE^2{+}E^*EDE{+}E^*E^2D{+}E^*E^3)\\
&=\tr(EE^*D^2{+}E^*DED{+}E^*ED^2{+}E^2E^*D{+}EE^*ED{+}E^*E^2D{+}E^*E^3).
\end{align*}
Then, some tedious calculus gives
\begin{align*}
\tr(E^*ED)&=i\,\alpha^2-(\beta ^2+|\delta |^2)-i(\gamma ^2{+}|\varepsilon |^2{+}|\varphi |^2),\\
\tr(EE^*D)&=\alpha^2{+}\beta ^2{+}\gamma ^2+i(|\delta |^2{+}|\varepsilon |^2)-|\varphi |^2,\\
\tr(E^*E^2)&=\alpha\beta \gamma +\alpha\gamma \varepsilon +\beta \gamma \varphi +\delta \bar\varepsilon \varphi ,
\end{align*}
whence $\tr(A^*A^2)=\alpha^2{+}\gamma ^2{-}|\delta| ^2{-}|\varphi |^2{+}\alpha\beta \delta {+}\alpha\gamma \varepsilon {+}\beta \gamma \varphi {+}\delta \bar\varepsilon \varphi +i(\alpha^2
{+}|\delta |^2{-}\gamma ^2{-}|\varphi |^2)$.
\begin{align*}
\tr(EE^*D^2)&=\alpha^2{+}\beta ^2{+}\gamma ^2{-}|\delta |^2{-}|\varepsilon |^2{+|\varphi |^2}\\
\tr(E^*DED)&=i\,\alpha^2-\beta ^2-i\,|\delta |^2-i\,\gamma ^2+|\varepsilon|^2+i\,|\varphi |^2,\\
\tr(E^*ED^2)&=-\alpha^2+\beta ^2+|\delta |^2-\gamma ^2{-}|\varepsilon |^2{-}|\varphi |^2,\\
\tr(E^2E^*D)&=\alpha\beta \delta {+}\alpha\gamma \varepsilon {+}\beta \gamma \varphi {+}i\delta \bar\varepsilon \varphi ,\\
\tr(EE^*ED)&=i(\alpha\beta \delta {+}\alpha\gamma \varepsilon )-\beta \gamma \varphi -\delta \bar\varepsilon \varphi ,\\
\tr(E^*E^2D)&=-\alpha\beta \delta -i(\alpha\gamma \varepsilon {+}\beta \gamma \varphi {+}\delta \bar\varepsilon \varphi ),\\
\tr(E^*E^3)&=\alpha\gamma \delta \varphi ,
\end{align*}
whence $\tr(A^*A^3)=\beta ^2{-}|\varepsilon|^2 {+}\alpha\gamma \varepsilon {-}\delta \bar\varepsilon \varphi {+}\alpha\gamma \delta \varphi {+}i(\alpha^2{-}|\delta |^2{-}\gamma ^2{+}|\varphi |^2{+}\alpha\beta \delta {-}\beta \gamma \varphi )$.

Therefore, the numerical range of $A$ will be invariant by rotation of angle $\frac{2\pi }d$ centered at 0
if and only if
\begin{align*}
\alpha^2{+}\gamma ^2{-}|\delta| ^2{-}|\varphi |^2{+}\alpha\beta \delta {+}\alpha\gamma \varepsilon {+}\beta \gamma \varphi {+}\delta \bar\varepsilon \varphi +i(\alpha^2
{+}|\delta |^2{-}\gamma ^2{-}|\varphi |^2)&=0,\\
\textrm{and\hskip1.5cm  }\beta ^2{-}|\varepsilon|^2 {+}\alpha\gamma \varepsilon {-}\delta \bar\varepsilon \varphi {+}\alpha\gamma \delta \varphi {+}i(\alpha^2{-}|\delta |^2{-}\gamma ^2{+}|\varphi |^2{+}\alpha\beta \delta {-}\beta \gamma \varphi )&=0.
\end{align*}
We have not succeeded to describe all the solutions of these algebraic equations. We have $4$ equations in $\R$ with $9$ real unknowns, which suggests that the general solution depends on $5$ real parameters. Hereafter we consider some subfamilies:\bigskip

\noindent$\bullet$ With $\alpha=\varphi =0$, we can choose $\varepsilon i \leq 0$, then we get the family:

 with a parameter $b\geq 0$\,: $\alpha=0$, $\beta =b$, $\gamma=0$, $\delta=0$, $\varepsilon =i\,b$, $\varphi =0$. Here the matrix $A$ is (unitarily similar to) the direct sum of the matrices $A_1$ and $iA_1$ with
  $A_1=\begin{pmatrix}1 &b\\0&-1\end{pmatrix}$.\medskip

\noindent$\bullet$ With $\varepsilon =\beta =0$, we can choose $\varphi\leq 0$, then we get two families of solutions of these equations\,:

with a parameter $a\geq 0$: $\alpha=a$, $\beta =0$, $\gamma=\sqrt{\frac{2\,a^2}{2{+}a^2}}$, $\delta=i\gamma $, $\varepsilon =0$, $\varphi =-a$,

with a parameter $a\in[ 0,\sqrt2[$: $\alpha=a$, $\beta =0$, $\gamma=\sqrt{\frac{2\,a^2}{2{-}a^2}}$, $\delta=-i\gamma $, $\varepsilon =0$, $\varphi =-a$.

\begin{align*}
A=\begin{pmatrix}
1&a&0 &c\\0 &i &ic &0\\0 &0 &-1 &-a\\0 &0 &0 &-i
\end{pmatrix},\quad \textrm{with \quad}c=\sqrt{2a^2/(2\pm a^2)},
\end{align*}

These families are distinct from the family of $M(\alpha_1,\dots,\alpha_4)$ studied in the previous section. For instance, for $a=1$ and $c=\sqrt{2/3}$,
the (numerically computed) singular values of $A$ are $2.077\dots$, $1.260\dots$, $1.149\dots$, $0.332\dots$,
and $\|A^2\|\simeq2.4142\dots$ If $A$ were unitarily similar to a matrix $M(\alpha_1,\dots,\alpha_4)$, then $\|A^2\|$ would be the product of two of these singular values; clearly, this is not the case.

\noindent$\bullet$ With two parameters $\alpha>0$ and $\gamma >0$, we get also two families of solutions of these equations\,: $\beta =\frac{\gamma ^2-\alpha^2}{\alpha\gamma}$, $\delta =\gamma $,
$\varepsilon =\alpha\gamma \pm\beta $, $\varphi =-\alpha$.
\medskip

\noindent$\bullet$ With two parameters $\rho >0$, $\theta \in]-\frac\pi 4,\frac\pi 4[$, we have the family
$\alpha=\gamma =0$, $\delta =\rho \,e^{i\theta }$, $\varepsilon =\frac2{\sqrt{\cos2\theta }}$, $\varphi =\rho \sqrt{\tan(\theta{+}\frac\pi 4)}$, $\beta =\sqrt{\rho ^2{+}\varepsilon ^2{+}\varphi ^2}$.

\noindent$\bullet$ There is also the family of matrices $M(\alpha_1,\dots,\alpha_4)$ considered in the previous section. They are unitarily similar to matrices of the form \eqref{rot4} and the entries $\alpha,\dots,\varphi $ could be expressed as functions of $\alpha_1,\dots,\alpha_4$, but the corresponding  formulae would be very involved.

\medskip

Thus, for $d\geq 4$ there are many other classes of matrices with numerical range invariant by these rotations. 
 
\section{Comments}

After the deposit of a first version of this paper on arXiv, from the reading of the article \cite{gifW} on a similar but more general subject, I have discovered my guilty ignorance of interesting previous works. To my knowledge, Theorem\,1 is new, it is limited to the case where we have the same $d$ for the dimension of the matrix and for the invariance under rotation of angle $2\pi /d$. Theorem\,2,  which is its translation in terms of traces,  was obtained for the $3\times3$ case in \cite{har}, and for the $4\times4$ case in \cite{dea}, these last authors have also the corresponding sufficient condition in the general $d\times d$ case, but not the necessary condition, this condition could be deduced from \cite[Theorem 4.2]{gifW}. I have not found in the literature the equivalent of Section\,4. The invariance under rotation of the matrices $M(\alpha_1,\dots,\alpha_d)$ is known since the Ph.D. dissertation of Issos, and Lemma\,\ref{lem6} may be found in \cite{har}. A different characterization of weight shift matrices with line segments on the boundary is given in \cite{tsaiwu}.
\bigskip

\noindent{\bf Acknowledgement. }{\em The author would like to express his thanks to Professor Jean-Claude Raoult for interesting discussions concerning the proof of Theorem 1, and to Professor Hugo Woederman for pointing out the article  \cite{gifW}.
}

\end{document}